\documentclass[11pt, twocolumn]{article}
\usepackage[T1]{fontenc}
\usepackage[utf8]{inputenc}

\usepackage{amsmath}
\usepackage{amssymb}
\usepackage{amsfonts}
\usepackage{amsthm}

\usepackage{microtype}
\usepackage[cm]{fullpage}

\usepackage[tt=false, type1=true]{libertine}
\usepackage[libertine]{newtxmath}

\usepackage{booktabs}

\usepackage{tikz-cd}

\usepackage{url}
\usepackage{hyperref}
\hypersetup{
    colorlinks,
    linkcolor={red!50!black},
    citecolor={blue!50!black},
    urlcolor={blue!80!black}
}

\def\N{\mathbb{N}}
\def\ZF{\mathsf{ZF}}

\let\term\textbf

\theoremstyle{plain}
\newtheorem{theorem}{Theorem}
\newtheorem{question}[theorem]{Question}
\newtheorem{definition}[theorem]{Definition}

\title{Conway and Doyle Can Divide by Three, But I Can't}
\author{Patrick Lutz\footnote{University of California, Los Angeles, Department of Mathematics Email:\ pglutz@math.ucla.edu}}
\date{}

\begin{document}

\maketitle

\begin{abstract}
Conway and Doyle have claimed to be able to divide by three. We attempt to replicate their achievement and fail. In the process, we get tangled up in some shoes and socks and forget how to multiply.
\end{abstract}

\section{Introduction}

In the paper ``Division by Three'' \cite{doyle1994division}, Conway and Doyle show that it is possible to divide by 3 in cardinal arithmetic, even without the axiom of choice. Actually, they show that it is possible to divide by $n$ for all natural numbers $n > 0$; they called their paper ``Division by Three'' rather than ``Division by $n$'' because the case $n = 3$ seems to capture all the difficulty of the full result. More precisely, they give a proof of the following theorem.

\begin{theorem}
\label{thm-shoe_division}
It is provable in $\ZF$ (Zermelo-Fraenkel set theory without the axiom of choice) that for any natural number $n > 0$ and any sets $A$ and $B$, if $|A\times n| = |B \times n|$ then $|A| = |B|$.
\end{theorem}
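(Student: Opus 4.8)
The plan is to unwind the hypothesis into an explicit bijection $f \colon A \times n \to B \times n$ and then build, by a construction that never invokes the axiom of choice, a bijection $A \to B$; since over $\ZF$ the equality $|A| = |B|$ just means ``there is a bijection $A \to B$'', this suffices. Two harmless reductions are available first: we may take $A$ and $B$ disjoint (pass to disjoint copies), and we may assume $n = p$ is prime, since if $n = n_1 n_2$ then $|A \times n| = |B \times n|$ reads as $|(A\times n_1)\times n_2| = |(B\times n_1)\times n_2|$, so dividing by $n_2$ and then by $n_1$ finishes --- an induction on the number of prime factors of $n$ reduces everything to the prime case (though I would not expect primality itself to make the real work any easier).

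Next I would reformulate. Pulling the partition of $B \times n$ into ``columns'' $\{b\} \times n$ back along $f$, the single set $X := A \times n$ carries two partitions into blocks of size exactly $p$ --- the original columns $\{a\} \times p$ and the pulled-back columns $f^{-1}(\{b\}\times p)$ --- and the goal becomes to show, \emph{canonically}, that the two sets of blocks are equinumerous. Equivalently, form the bipartite multigraph $H$ with vertex classes $A$ and $B$, one edge per element $x \in X$, joining the $A$-column and the $B$-column that contain $x$ and $f(x)$. Then $H$ is $p$-regular and bipartite, a perfect matching of $H$ is literally a bijection $A \to B$, and every edge carries a label in $\{0, \dots, p-1\}$ at each of its two endpoints (the $n$-coordinate of $x$ at the $A$-end, of $f(x)$ at the $B$-end), with the $p$ labels at any one vertex all distinct. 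So it would be enough to extract, definably and without choice, a single perfect matching of this labelled multigraph.

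For $p = 2$ I believe this goes through by hand: a $2$-regular multigraph is a disjoint union of finite cycles (even, since bipartite) and $\mathbb Z$-lines and nothing else; in each such component the $A$- and $B$-vertices alternate and so can be paired off, and the labelling attached to the two edges at every vertex should supply enough rigidity to do this walk-along-the-component matching uniformly, with no arbitrary selections. The hope would be to mimic this for a general prime $p$: decompose $H$ (or the two-partition picture on $X$) into canonical ``orbit'' pieces and match within each piece using the extra labelling.

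The main obstacle lives exactly here, and I expect it to be the whole fight. Once $p \ge 3$ the components of a $p$-regular bipartite multigraph are no longer one-dimensional: already a finite $3$-regular bipartite multigraph has many perfect matchings with no canonical one among them, and since $H$ may have infinitely many components, any procedure that quietly picks a matching component by component is smuggling in choice. Finding genuinely canonical structure on these higher-dimensional pieces --- a distinguished sub-object, a forced orientation, some symmetry-breaking ``vote'' or average --- is, I suspect, precisely the ingenuity Conway and Doyle supply, and precisely what the leap from $n = 2$ to $n = 3$ forces one to discover. Without that idea in hand, this is where my proof stalls.
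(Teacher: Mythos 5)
You have set the problem up correctly, and your reformulation --- a $p$-regular bipartite multigraph on $A \sqcup B$ with one edge per element of $A \times p$, each edge carrying a label in $\{1,\dots,p\}$ at each endpoint, the labels at any one vertex all distinct --- is essentially the standard picture in which this theorem is actually proved. Your two reductions are also fine: passing to disjoint copies is harmless, and the factorization $A \times n \cong (A \times n_1) \times n_2$ is canonical, so induction on the number of prime factors legitimately reduces to prime $n$ (though, as you anticipate, primality buys nothing). The $n = 2$ case you sketch is the classical Bernstein argument and does go through in $\ZF$: the components are even cycles and $\mathbb{Z}$-lines, and the endpoint labels give each component enough rigidity to select one of its two perfect matchings uniformly.

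But the place where you say ``this is where my proof stalls'' is not a technical detail to be filled in later; it is the entire content of the theorem. Everything before it is bookkeeping. What is missing is a choice-free, uniform procedure for extracting a perfect matching from an edge-labelled $p$-regular bipartite multigraph with possibly infinitely many components, each of which individually admits many matchings with no obvious distinguished one --- and producing that procedure is exactly the achievement of Lindenbaum--Tarski, Tarski, Conway--Doyle, and Doyle--Qiu. (The present paper does not reprove it either; it defers to those sources.) The known arguments do not proceed by classifying components and matching within each; they instead run a global, definable process on the whole graph --- roughly, using the two labels on each edge to set up a priority scheme among the edges at each vertex and then iterating a Hall-type augmentation whose every step is forced by the labels, or, in Doyle and Qiu's version, first establishing a cancellation law for subtraction and a separate division-by-two argument and then handling odd $n$ by a parity/tournament device. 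None of this is recoverable from the setup alone, so as it stands you have an honest and well-organized reduction of the theorem to itself, not a proof.
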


Here we are using the notation $A\times n$ to denote the set $A\times\{1,2,\ldots, n\}$ and the notation $|A| = |B|$ to mean that there is a bijection between $A$ and $B$.

The purpose of this article is to question whether the statement of Theorem \ref{thm-shoe_division} is really the correct definition of ``dividing by $n$ without choice.'' We will propose an alternative statement, show that it is not provable without the axiom of choice, and explain what all this has to do with Bertrand Russell's socks.

Of course, none of this should be taken too seriously. I'm not really here to argue about what ``division by $n$ without choice'' means. Instead, the goal is to have fun with some interesting mathematics, and the question of what ``division by $n$ without choice'' should really mean is merely an inviting jumping-off point.

\subsection*{Mathematics Without Choice}

What does it mean to do math without the axiom of choice? In brief, it means that if we are proving something and want to describe a construction that requires infinitely many choices then we must describe explicitly how these choices are to be made, rather than just assuming that they can be made any-which-way when the time comes.

There is a well-known example, due to Bertrand Russell, that illustrates this issue. Suppose there is a millionaire who loves to buy shoes and socks. Every day, he buys a pair of shoes and a pair of socks, and after infinitely many days have passed, he has amassed infinitely many pairs of each. He then asks his butler to pick out one shoe from each pair for him to display in his foyer. The butler wants to make sure he is following the millionaire's instructions precisely, so he asks how to decide which shoe to pick from each pair. The millionaire replies that he can pick the left shoe each time. The next day, the millionaire decides he would also like to display one sock from each pair and so he asks the butler to do so. When the butler again asks how he should decide which sock to pick from each pair, the millionaire is stymied---there is no obvious way to distinguish one sock in a pair from the other.\footnote{When Russell introduced this example, he was careful to point out that in real life there actually are ways to distinguish between socks---for instance, one of them probably weighs slightly more than the other---but he asked for ``a little goodwill'' on the part of the reader in interpreting the example.}

The point of this example is that if we have a sequence $\{A_i\}_{i \in \N}$ of sets of size 2, then there is no way to prove without the axiom of choice that $\Pi_{i \in \N} A_i$ is nonempty. Doing so would require explicitly constructing an element of $\Pi_{i \in \N}A_i$, which is analogous to giving a way to choose one sock from each pair in the millionaire's collection. On the other hand, if we have a fixed ordering on each set $A_i$ in the sequence, then we \emph{can} show without choice that $\Pi_{i \in \N}A_i$ is nonempty, just as it was possible to choose one shoe from each pair.

Russell's story about the shoes and socks may seem like just a charming and straightforward illustration of the axiom of choice, but we will return to it a few times throughout this article and see that there is more to it than is initially apparent.

\section{Failing to Divide by Three}

\subsection*{You Can Divide by Three}

As we mentioned earlier, in the paper ``Division by Three,'' Conway and Doyle prove without the axiom of choice that for any natural number $n > 0$ and any sets $A$ and $B$, if $|A \times n| = |B \times n|$ then $|A| = |B|$. What this requires is giving an explicit procedure to go from a bijection between $A\times n$ and $B\times n$ to a bijection between $A$ and $B$.

This result has a long history. It was (probably) first proved by Lindenbaum and Tarski in 1926 \cite{lindenbaum1926communication}, but the proof was not published and seems to have been forgotten. The first published proof was by Tarski in 1949 and is regarded as somewhat complicated \cite{tarski1949cancellation}. Conway and Doyle gave a simpler (and more entertainingly exposited) proof, which they claimed may be the original proof by Lindenbaum and Tarski. Later, the proof was simplified even more by Doyle and Qiu in the paper ``Division by Four'' \cite{doyle2015division}. There is also a charming exposition of Doyle and Qiu's proof in the article ``Pangalactic Division'' by Schwartz \cite{schwartz2015pan}.

\subsection*{Can You Divide by Three?}

Does the statement of Theorem \ref{thm-shoe_division} really capture what it means to divide by $n$ without choice? To explain what we mean, we first need to say a little about how division by $n$ is proved. Recall that we are given a bijection between $A\times n$ and $B\times n$, and we need to construct a bijection between $A$ and $B$. We can think of both $A\times n$ and $B\times n$ as unions of collections of disjoint sets of size $n$. Namely,
\begin{align*}
A\times n &= \bigcup_{a \in A} \{(a, 1), (a, 2), \ldots, (a, n)\}\\
B\times n &= \bigcup_{b \in B} \{(b, 1), (b, 2), \ldots, (b, n)\}.
\end{align*}
A key point, which every known proof uses, is that we can simultaneously order every set in the two collections using the ordering induced by the usual ordering on $\{1, 2, \ldots, n\}$.

But if we are already working without the axiom of choice, this seems like an unfair restriction. Why not also allow collections of \emph{unordered} sets of size $n$? This gives us an alternative version of ``division by $n$ without choice'' in which we replace the collections $A\times n$ and $B\times n$ with collections of unordered sets of size $n$ (we will give a precise statement of this version below). Since collections of ordered sets of size $n$ behave like the pairs of shoes from Russell's example while collections of unordered sets of size $n$ behave like the pairs of socks, we will refer to the standard version as ``shoe division'' and the alternative version as ``sock division.''

\begin{definition}
Suppose $n > 0$ is a natural number. \term{Shoe division by} {\boldmath$n$} is the principle that for any sets $A$ and $B$, if $|A\times n| = |B \times n|$ then $|A| = |B|$.
\end{definition}

\begin{definition}
Suppose $n > 0$ is a natural number. \term{Sock division by} {\boldmath$n$} is the principle that for any sets $A$ and $B$ and any collections $\{X_a\}_{a \in A}$ and $\{Y_b\}_{b \in B}$ of disjoint sets of size $n$, if $|\bigcup_{a \in A}X_a| = |\bigcup_{b \in B}Y_b|$ then $|A| = |B|$.
\end{definition}

Since we know that shoe division by $n$ is provable without the axiom of choice, it is natural to wonder whether the same is true of sock division. 

By the way, this is not the first time that someone has asked about the necessity of having collections of ordered rather than unordered sets when dividing by $n$ in cardinal arithmetic. In the paper ``Equivariant Division'' \cite{bajpai2017equivariant}, Bajpai and Doyle consider when it is possible to go from a bijection $A\times n \to B \times n$ to a bijection $A \to B$ when the bijections are required to respect certain group actions on $A$, $B$, and $n$. Since the axiom of choice can be considered a way to ``break symmetries,'' the question of whether sock division is provable without choice is conceptually very similar to the questions addressed by Bajpai and Doyle.

\subsection*{You Can't Divide by Three}

In this section we will show that sock division by 3 is not provable without the axiom of choice. In fact, neither is sock division by 2 or, for that matter, sock division by $n$ for any $n > 1$.

\begin{theorem}
\label{thm-sock_division}
For any natural number $n > 1$, the principle of sock division by $n$ is not provable in $\ZF$.
\end{theorem}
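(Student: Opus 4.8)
To prove Theorem~\ref{thm-sock_division} it suffices to deduce, within $\ZF$, that sock division by $n$ implies some statement already known to be independent of $\ZF$; then sock division by $n$ cannot itself be a theorem of $\ZF$. The statement I would use is: every countable family of pairwise disjoint $n$-element sets has a countable union. This principle is classically unprovable in $\ZF$ once $n > 1$ — for $n = 2$ it amounts to saying that the millionaire could count all his socks — so a $\ZF$-proof that it follows from sock division by $n$ finishes the job.

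The reduction is the crux, and the idea is to present one set as a disjoint union of $n$-element sets in two genuinely different ways. Suppose $\{P_i\}_{i \in \N}$ is a family of pairwise disjoint sets each of size $n$, and let $U = \bigcup_{i \in \N} P_i$; I want $U$ to be countable. Consider the set $U \times \{1, \ldots, n\}$. On one hand it is the disjoint union of the $n$-element sets $X_{(i,j)} = P_i \times \{j\}$, indexed by $(i,j) \in \N \times \{1, \ldots, n\}$, a countable index set. On the other hand it is the disjoint union of the $n$-element sets $Y_u = \{u\} \times \{1, \ldots, n\}$, indexed by $u \in U$. Since these two unions are literally the same set, sock division by $n$ forces $|\N \times \{1, \ldots, n\}| = |U|$, i.e.\ $U$ is countable. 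Both families and both index sets are defined outright from $\{P_i\}_{i \in \N}$, so no choice is used anywhere.

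It then remains to recall why a countable union of $n$-element sets need not be countable in $\ZF$ — the socks phenomenon itself. I would point to a permutation model: let the atoms be partitioned into countably many blocks $\{P_i\}_{i \in \N}$ of size $n$, and take the group of finite-support permutations of the atoms that fix each block setwise, i.e.\ $\prod_{i} \mathrm{Sym}(P_i)$. In this model $\{P_i\}_{i \in \N}$ is a bona fide countable family of $n$-element sets, yet its union (the set of all atoms) admits no injection into $\N$: any such injection would have a finite support $E$, hence be invariant under every permutation fixing $E$ pointwise, and would therefore be constant on each block disjoint from $E$ — impossible for an injection since $n > 1$. Transferring this failure to an honest model of $\ZF$ is then handled by the Jech--Sochor embedding theorem (or one simply cites that the unprovability of this principle is already standard, e.g.\ via the basic Cohen model).

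The one genuinely clever step is the reduction of the second paragraph: noticing that $U \times \{1, \ldots, n\}$ can be carved into $n$-blocks both ``horizontally'' (one block over each $P_i$ and each coordinate, giving a countable index) and ``vertically'' (one block over each point of $U$), so that the hypothesis of sock division pins down $|U|$. Everything after that — the structure of the permutation model, the finite-support bookkeeping, and the transfer to $\ZF$ — is routine independence-of-choice machinery, and the transfer in particular is unproblematic because the counterexample is witnessed by sets of very low rank.
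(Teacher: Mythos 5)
Your proof is correct and uses the same central device as the paper: decomposing $U\times\{1,\ldots,n\}$ into $n$-blocks both as the sets $P_i\times\{j\}$ (countably indexed) and as the sets $\{u\}\times\{1,\ldots,n\}$ (indexed by $U$), so that sock division yields a bijection $U\to\N\times\{1,\ldots,n\}$. The paper's endgame uses the lexicographic order on $\N\times\{0,1\}$ to extract a choice function for Russell's socks, whereas you read off directly that $U$ is countable; both conclusions rest on a standard $\ZF$-independence fact, which you (unlike the paper) additionally justify with a permutation-model sketch.
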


\begin{proof}
We will show that if sock division by 2 is possible then it is also possible to choose socks for Bertrand Russell's millionaire. The full theorem follows by noting that the proof works not just for human socks but also for socks for octopi with $n > 1$ tentacles.

More precisely, suppose sock division by 2 does hold and let $\{A_i\}_{i \in \N}$ be a sequence of disjoint sets of size 2. We will show that $\Pi_{i \in \N}A_i$ is nonempty by constructing a choice function for $\{A_i\}_{i \in \N}$. We can picture $\{A_i\}_{i \in \N}$ as a sequence of pairs of socks.
\hspace*{7pt}\includegraphics[trim= 10 80 0 80, clip, width=\columnwidth]{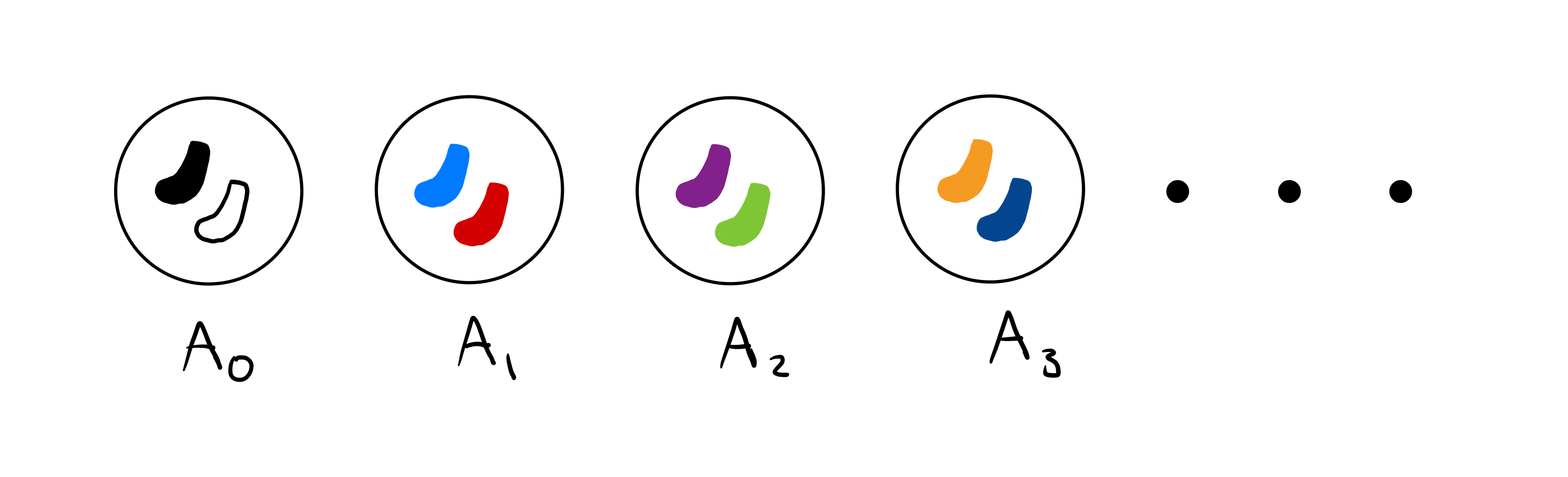}

Now consider taking a single pair of socks---say $A_0 = \{x_0, y_0\}$---and forming the Cartesian product of this pair with the set $\{0, 1\}$. This gives us a 4 element set, depicted by the grid below.
\begin{center}
\hspace*{-10pt}\includegraphics[trim= 10 80 0 20, clip, width=0.4\columnwidth]{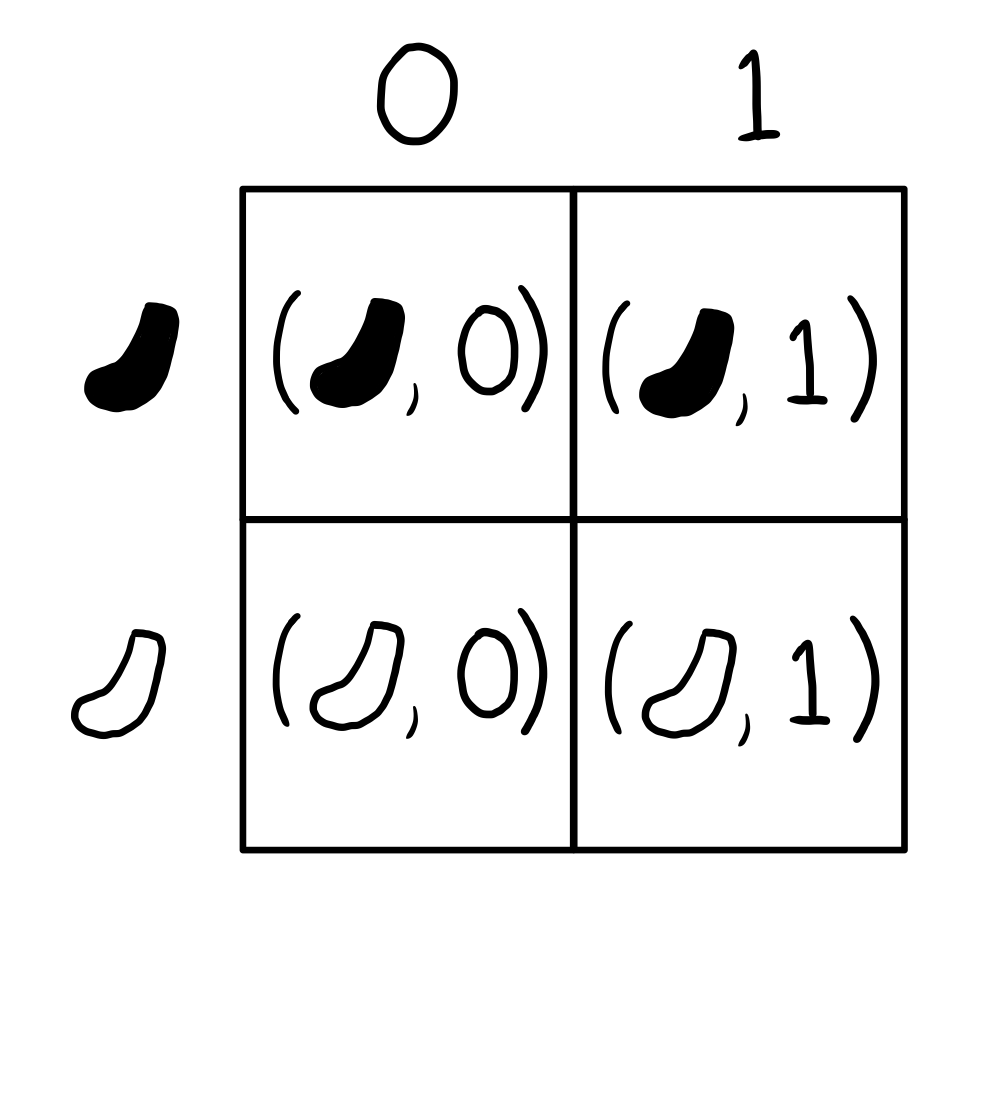}
\end{center}
We will divide this 4 element set into a pair of 2 element sets in two different ways. First, we can take the rows of the grid to get the sets $\{(x_0, 0), (x_0, 1)\}$ and $\{(y_0, 0), (y_0, 1)\}$.
\begin{center}
\includegraphics[trim= 0 80 0 80, clip, width=0.75\columnwidth]{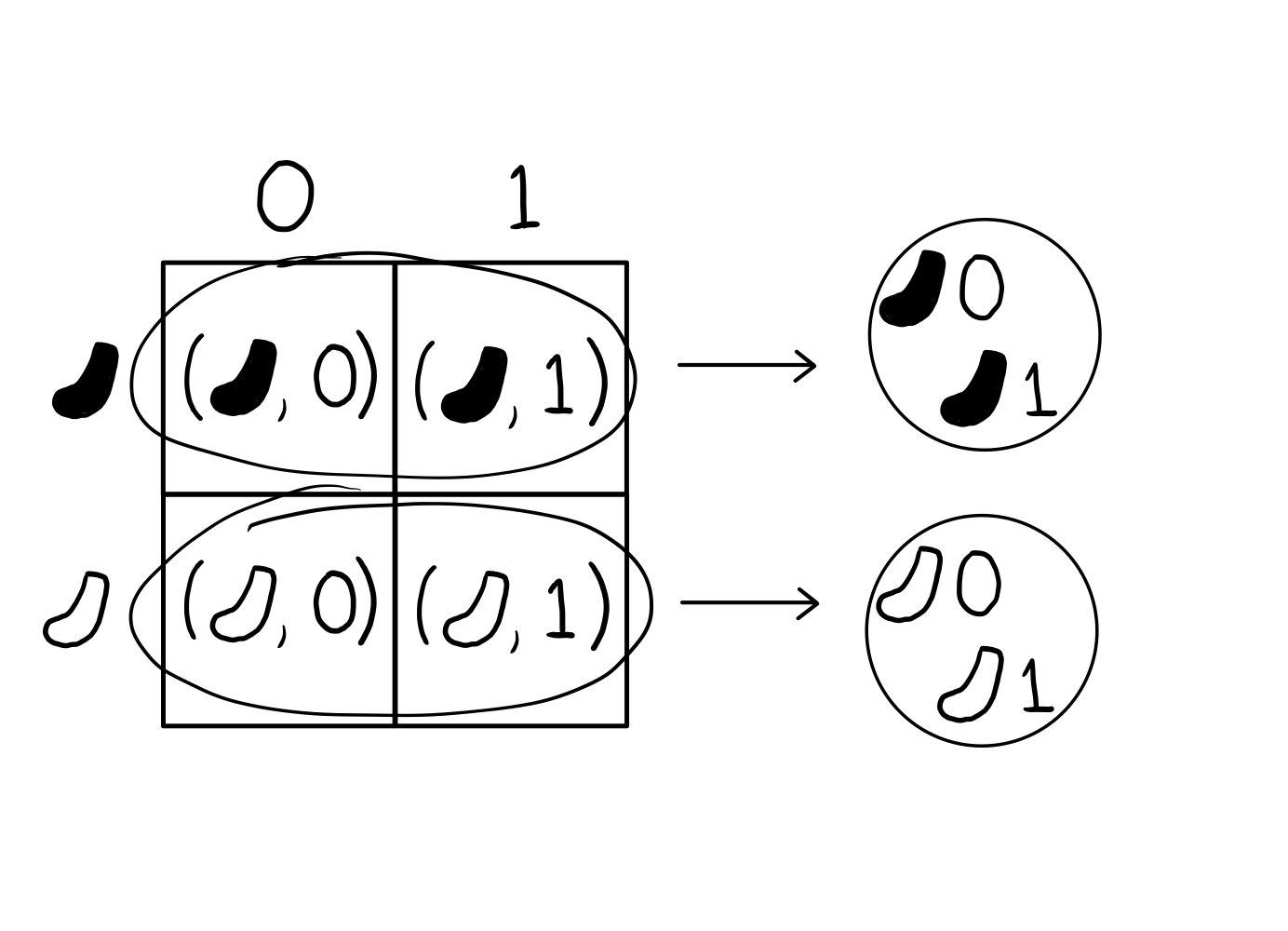}
\end{center}
Second, we can take the columns of the grid to get the sets $\{(x_0, 0), (y_0, 0)\}$ and $\{(x_0, 1), (y_0, 1)\}$.
\begin{center}
\hspace{-14pt}\includegraphics[trim= 10 40 0 60, clip, width=0.7\columnwidth]{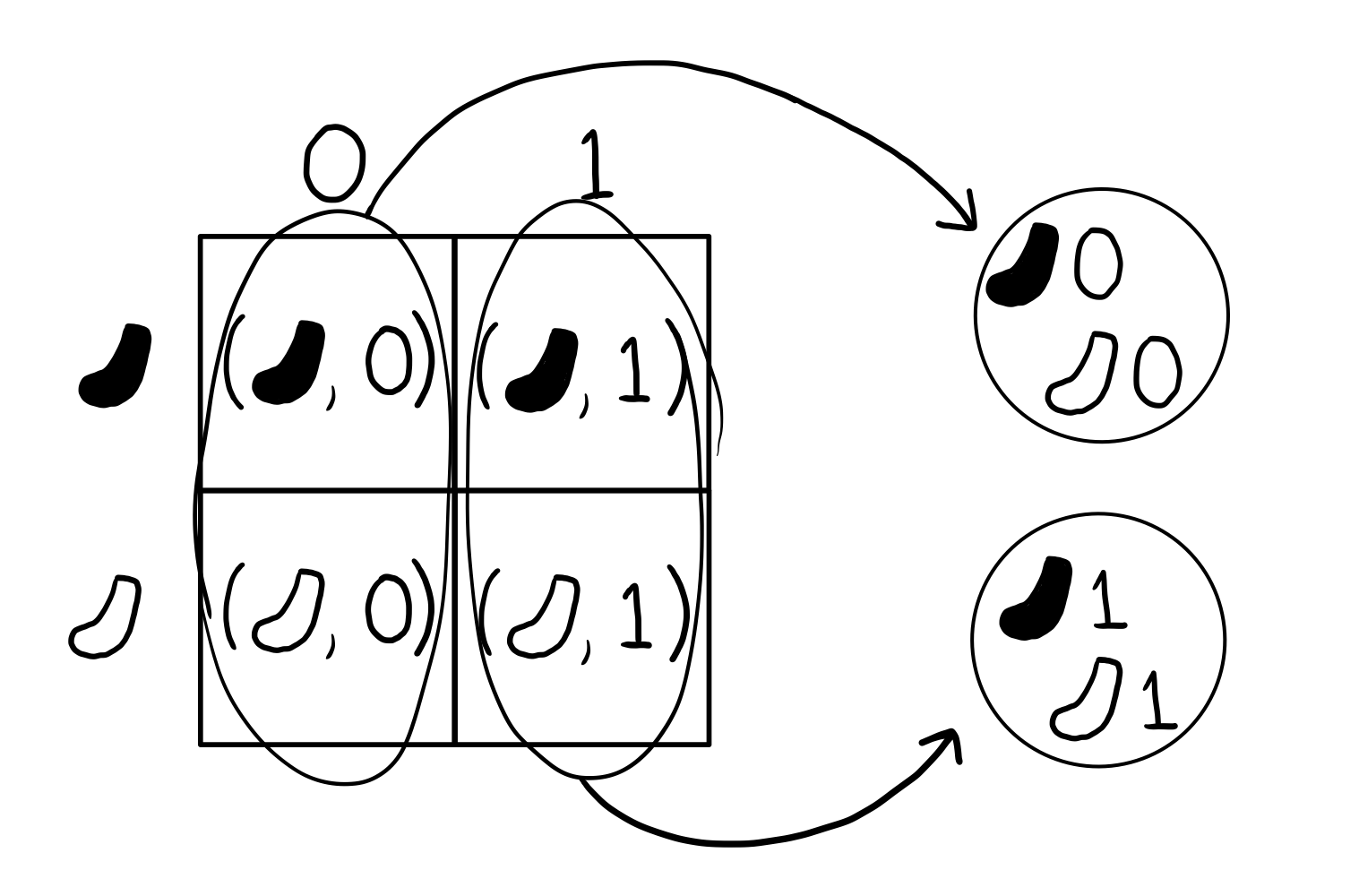}
\end{center}

If we repeat this procedure for every pair of socks, we end up with two collections of disjoint sets of size 2---one consisting of the rows of the grids formed from each pair and the other consisting of the columns.
\begin{center}
\includegraphics[trim= 0 50 0 80, clip, width=0.9\columnwidth]{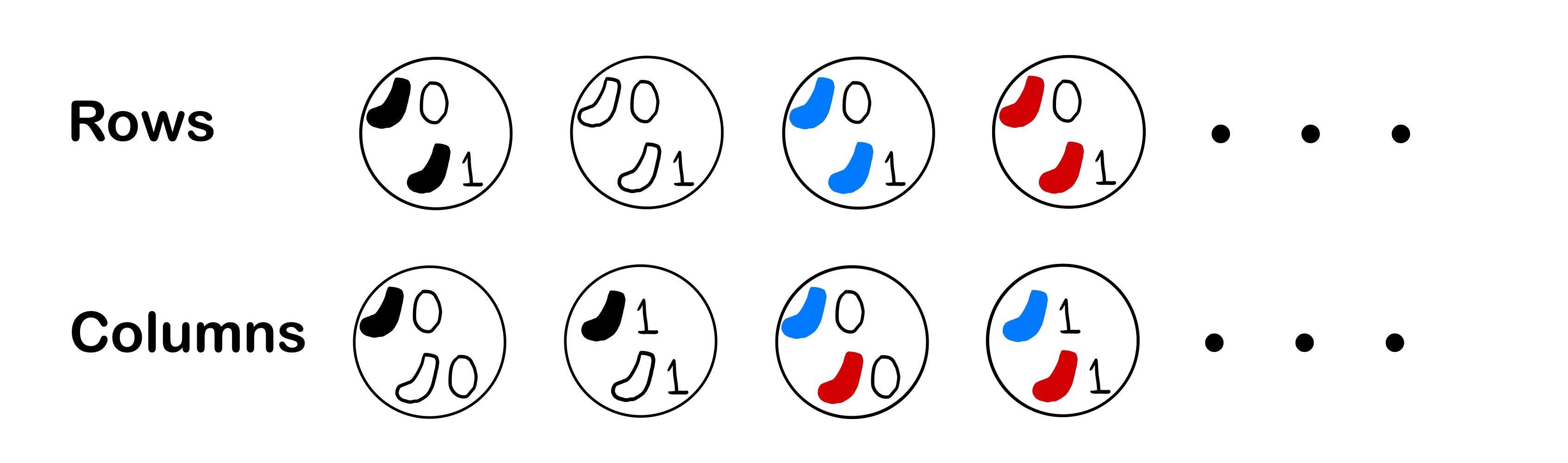}
\end{center}

Now we will observe a few things about these two collections. 
\begin{itemize}
    \item First, each set in the collection of rows has the form $\{(x, 0), (x, 1)\}$ for some $x \in \bigcup_{i \in \N}A_i$, so we can think of the collection of rows as being indexed by $\bigcup_{i \in \N}A_i$ (i.e.\ indexed by the individual socks).
    \item Second, each set in the collection of columns either has the form $A_i\times\{0\}$ for some $i \in \N$ or the form $A_i \times \{1\}$ for some $i \in \N$, so we can think of the collection of columns as being indexed by $\N\times \{0, 1\}$.
    \item Lastly, the union of the collection of rows and the union of the collection of columns are identical---they are both just equal to $\bigcup_{i \in \N}(A_i\times\{0, 1\})$.
\end{itemize}
\begin{center}
\hspace*{10pt}\includegraphics[trim= 0 170 0 80, clip, width=0.9\columnwidth]{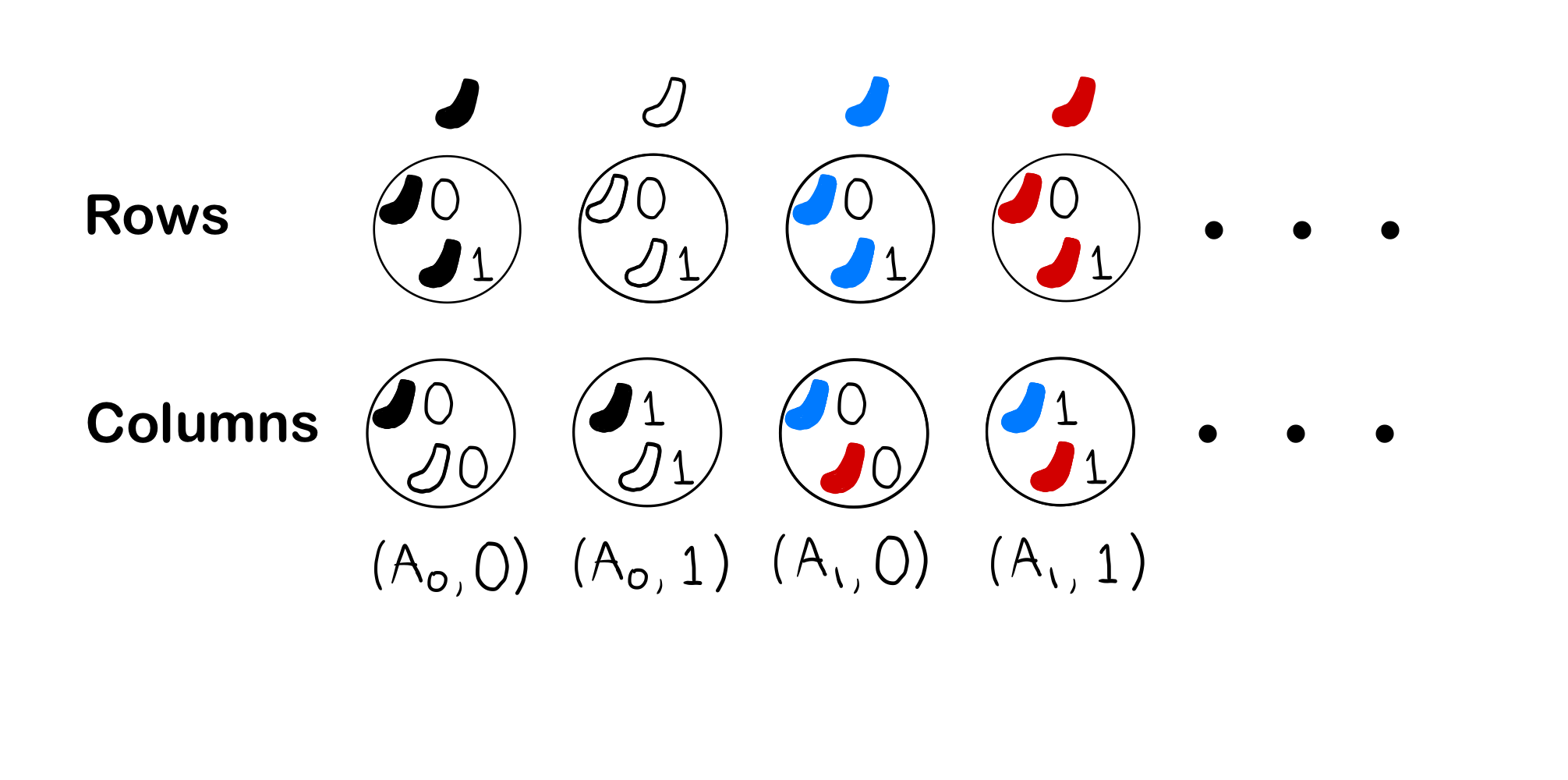}
\end{center}

The principle of sock division by 2 says that if the unions of two collections of disjoint sets of size 2 are in bijection then the sets indexing those collections are also in bijection. Thus we can conclude that there is a bijection $f \colon (\bigcup_{i \in \N}A_i) \to \N\times\{0,1\}$.

We can now describe how to choose one sock from each pair. Consider a pair of socks, $A_i = \{x, y\}$. We know that $x$ is mapped by $f$ to some pair $(n, b) \in \N\times \{0, 1\}$ and $y$ is mapped to some other pair, $(m, c)$. We can choose between $x$ and $y$ by picking whichever one is mapped to the smaller pair in the lexicographic ordering on $\N\times \{0, 1\}$. 
\end{proof}

\section{Cardinal Arithmetic and the Power of Sock Division}

In this section we will discover another view of sock division by considering how to define multiplication of cardinals without choice.

\subsection*{Shoes and Socks, Revisited}

Suppose we have two sets, $A$ and $B$. How should we define the product of their cardinalities? The standard answer is that it is the cardinality of their Cartesian product---i.e.\ $|A\times B|$. But there is another possible definition. Suppose $\{X_a\}_{a \in A}$ is a collection of disjoint sets such that each $X_a$ has the same cardinality as $B$. Since taking a disjoint union of sets corresponds to taking the sum of their cardinalities, we can think of $|\bigcup_{a \in A} X_a| = \sum_{a \in A}|X_a|$ as an alternative definition of ``the cardinality of $A$ times the cardinality of $B$.'' 

One way to think of these two definitions is that the first interprets multiplication as the area of a rectangle, while the second interprets it as repeated addition. 

\begin{center}
\begin{tabular}{@{}lc@{}}
  \toprule
  \textbf{Multiplication is \ldots} & \\
  \midrule
  The area of a rectangle: & $|A|\times |B| = |A\times B|$\\
  Repeated addition: & $|A|\times|B| = |\bigcup_{a \in A}X_a|$\\
\bottomrule
\end{tabular}
\end{center}

One problem with thinking of multiplication as repeated addition, however, is that without the axiom of choice, it may not be well-defined. In particular, it is possible to have two collections $\{X_a\}_{a \in A}$ and $\{Y_a\}_{a \in A}$ of disjoint sets of size $|B|$ such that $|\bigcup_{a \in A}X_a| \neq |\bigcup_{a \in A} Y_a|$. In fact, this is actually the original context for Russell's example about shoes and socks. The following passage is from his 1919 book \emph{Introduction to Mathematical Philosophy} \cite{russell1919introduction} (note that he refers to the axiom of choice as the ``multiplicative axiom,'' since it guarantees that every nonzero product of nonzero cardinalities is nonzero).

\begin{quote}
Another illustration may help to make the point clearer. We know that $2\times \aleph_0 = \aleph_0$. Hence we might suppose  that  the  sum  of $\aleph_0$ pairs  must  have $\aleph_0$ terms. But this, though we can prove that it is sometimes the case, cannot be proved to happen always unless we assume the multiplicative axiom. This is illustrated by the millionaire who bought a pair of socks whenever he bought a pair of boots, and never at any other time, and who had such a passion for buying both that at last he had $\aleph_0$ pairs of boots and $\aleph_0$ pairs of socks. The problem is: How many boots had he, and how many socks? One would naturally suppose that he had twice as many boots and twice as many socks as he had pairs of each, and that therefore he had $\aleph_0$ of each, since that number is not increased by doubling. But this is an instance of the difficulty, already noted, of connecting the sum of $\nu$ classes each having $\mu$ terms with $\mu\times \nu$. Sometimes this can be done, sometimes it cannot. In our case it can be done with the boots, but not with the socks, except by some very artificial device.
\end{quote}

\subsection*{Multiplication vs.\ Division}

Let's revisit the difference between shoe division and sock division in light of what we have just discussed. When discussing ``division by $n$ without choice,'' we have implicitly defined division in terms of multiplication. Being able to divide by $n$ means that whenever we have $|A|\times n = |B| \times n$, we can cancel the $n$'s to get $|A| = |B|$. The only difference between shoe division and sock division is what definition of multiplication is used (i.e.\ what is meant by $|A|\times n$ and $|B|\times n$). In shoe division, multiplication is interpreted in the usual way, i.e.\ as ``the area of a rectangle.'' In sock division, it is interpreted as ``repeated addition.''

When we view shoe division and sock division in this way, it is clear that if ``multiplication by $n$ as repeated addition of $n$'' is well-defined then sock division by $n$ holds (because in this case it is equivalent to shoe division by $n$). Thus it is natural to ask what the exact relationship is between these two principles.

A priori, they are fairly different statements. Sock division by $n$ says that if we have two collections $\{X_a\}_{a \in A}$ and $\{Y_b\}_{b \in B}$ of disjoint sets of size $n$ then we can go from a bijection between $\bigcup_{a \in A}X_a$ and $\bigcup_{b \in B}Y_b$ to a bijection between $A$ and $B$ while ``multiplication by $n$ as repeated addition of $n$ is well-defined'' says that we can go from a bijection between $A$ and $B$ to a bijection between $\bigcup_{a \in A}X_a$ and $\bigcup_{b \in B}Y_b$. However, it turns out that the two principles are actually equivalent and the proof of this is implicit in our proof of Theorem \ref{thm-sock_division}. Let's make all of this more precise.

\begin{definition}
Suppose $n > 0$ is a natural number. \term{Multiplication by {\boldmath$n$} is equal to repeated addition of {\boldmath$n$}} is the principle that for any set $A$ and any collection $\{X_a\}_{a \in A}$ of disjoint sets of size $n$, $|\bigcup_{a \in A}X_a| = |A\times n|$.
\end{definition}

What we can show is that in $\ZF$, the principle of sock division by $n$ is equivalent to the principle that multiplication by $n$ is equal to repeated addition of $n$. 

\begin{theorem}
\label{thm-sock_multiplication}
It is provable in $\ZF$ that for any natural number $n > 0$, the principle of sock division by $n$ holds if and only if the principle that multiplication by $n$ is equal to repeated addition by $n$ holds.
\end{theorem}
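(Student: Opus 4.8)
The plan is to prove the two implications separately. The easy direction is ``multiplication by $n$ equals repeated addition of $n$'' $\Rightarrow$ ``sock division by $n$,'' and it reduces to shoe division. Suppose we have sets $A$ and $B$ together with collections $\{X_a\}_{a \in A}$ and $\{Y_b\}_{b \in B}$ of disjoint sets of size $n$, and suppose $|\bigcup_{a \in A} X_a| = |\bigcup_{b \in B} Y_b|$. Applying the hypothesis to each of the two collections gives $|\bigcup_{a \in A} X_a| = |A \times n|$ and $|\bigcup_{b \in B} Y_b| = |B \times n|$, so $|A \times n| = |B \times n|$; now shoe division by $n$, which is provable in $\ZF$ by Theorem~\ref{thm-shoe_division}, gives $|A| = |B|$.

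For the converse, I would essentially extract the argument already present in the proof of Theorem~\ref{thm-sock_division}. Given a set $A$ and a collection $\{X_a\}_{a \in A}$ of disjoint sets of size $n$, the goal is a bijection between $\bigcup_{a \in A} X_a$ and $A \times n$. For each $a$, form the ``grid'' $X_a \times \{1, \ldots, n\}$ and slice it in two ways. The rows are the sets $\{x\} \times \{1, \ldots, n\}$ for $x \in X_a$; letting $a$ range over $A$, these form a collection of pairwise disjoint $n$-element sets indexed by $\bigcup_{a \in A} X_a$. The columns are the sets $X_a \times \{j\}$ for $j \in \{1, \ldots, n\}$; these form a collection of pairwise disjoint $n$-element sets indexed by $A \times n$ (here we use $|X_a| = n$). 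The point is that the union of the row collection and the union of the column collection are literally the same set, namely $\bigcup_{a \in A}(X_a \times \{1, \ldots, n\})$, so in particular they are in bijection. Sock division by $n$ then yields $|\bigcup_{a \in A} X_a| = |A \times n|$, as desired.

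I do not expect a genuine obstacle; the content is essentially bookkeeping. The point needing a little care is checking that the row collection and the column collection really are collections of pairwise disjoint sets of size exactly $n$: disjointness of the rows across different $a$ uses that the $X_a$ are disjoint, distinctness of the columns uses only that the pairs $(a, j)$ are distinct together with each $X_a$ being nonempty, and the size condition for the columns uses $|X_a| = n$. It is also worth noting that, unlike in the proof of Theorem~\ref{thm-sock_division}, no ordering needs to be invoked at the end: we are only asserting equality of cardinalities, so exhibiting that the two unions coincide is already enough, and no choice is used beyond the single appeal to sock division itself.
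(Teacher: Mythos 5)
Your proposal is correct and matches the paper's proof essentially exactly: the $(\impliedby)$ direction reduces to shoe division by applying the hypothesis to both collections, and the $(\implies)$ direction applies sock division to the two decompositions (rows and columns) of $\bigcup_{a \in A}(X_a \times n)$, whose unions coincide. Your added remarks verifying disjointness and the size conditions are sound and only make explicit what the paper leaves implicit.
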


\begin{proof}
($\impliedby$) First suppose ``multiplication by $n$ is equal to repeated addition of $n$'' holds. Let $A$ and $B$ be any sets and let $\{X_a\}_{a \in A}$ and $\{Y_b\}_{b \in B}$ be two collections of disjoint sets of size $n$ such that $|\bigcup_{a \in A}X_a| = |\bigcup_{b \in B}Y_b|$. Applying ``multiplication is repeated addition,'' we have
\begin{align*}
|A\times n| = \left|\bigcup\nolimits_{a \in A}X_a\right| = \left|\bigcup\nolimits_{b \in B}Y_b\right| = |B\times n|.
\end{align*}
And by applying shoe division by $n$, we get $|A| = |B|$.

\medskip
\noindent
($\implies$) Now suppose sock division by $n$ holds and let $A$ be any set and $\{X_a\}_{a \in A}$ be a collection of disjoint sets of size $n$. Consider the set $\bigcup_{a \in A}(X_a \times n)$. We can view this set as a union of a collection of disjoint sets of size $n$ in two different ways:
\begin{align*}
\bigcup\nolimits_{a \in A}(X_a \times n) &= \bigcup\nolimits_{a \in A,\ i \le n} \{(x, i) \mid x \in X_a\}\\
\bigcup\nolimits_{a \in A}(X_a \times n) &= \bigcup\nolimits_{a \in A,\ x \in X_a} \{(x, i) \mid i \le n\}.
\end{align*}
The first of these two collections is indexed by $A\times n$ and the second is indexed by $\bigcup_{a \in A}X_a$. And since the unions of the two collections are identical, sock division implies that $|A\times n| = |\bigcup_{a \in A}X_a|$.
\end{proof}

\subsection*{Sock Geometry}

Just how powerful is sock division? We have just seen that if sock division by $n$ holds then multiplication by $n$ is equal to repeated addition of $n$. In other words, for any set $A$ and any collection $\{X_a\}_{a \in A}$ of disjoint sets of size $n$, there is a bijection between $\bigcup_{a \in A}X_a$ and $A\times n$. However, this bijection does not necessarily respect the structure of $\bigcup_{a \in A}X_a$ and $A\times n$ as collections of size $n$ sets indexed by $A$: we are not guaranteed that the image of each $X_a$ is equal to $\{a\}\times n$. It seems reasonable, then, to ask whether sock division by $n$ implies the existence of a bijection that does respect this structure.

It is natural to phrase this question using terms from geometry, and in particular in the language of fiber bundles. It is possible to understand everything in this section even if you do not know what a bundle is, but our choice of terminology may seem a bit odd.

We can think of a collection $\{X_a\}_{a \in A}$ of disjoint sets of size $n$ as a kind of bundle over $A$. We will refer to it as an \term{{\boldmath$n$}-sock bundle}, or just a \term{sock bundle} for short. We can think of the index set $A$ as the \term{base space} of the bundle and the union $\bigcup_{a \in A}X_a$ as the \term{total space}. If $\{X_a\}_{a \in A}$ and $\{Y_a\}_{a \in A}$ are two $n$-sock bundles then a \term{sock bundle isomorphism} between them is a bijection $f \colon \bigcup_{a \in A}X_a \to \bigcup_{a \in A}Y_a$ such that for each $a$, the image of $X_a$ is $Y_a$---in other words, such that the following diagram commutes (where $\pi$ and $\pi'$ denote the natural projections $\bigcup_{a \in A}X_a \to A$ and $\bigcup_{a \in A}Y_a \to A$).
\begin{center}
\begin{tikzcd}
\bigcup_{a \in A}X_a \arrow[r, "f"] \arrow[d, "\pi"] & \bigcup_{a \in A}Y_a \arrow[d, "\pi'"] \\
A \arrow[r, "\text{id}"]                             & A                                     
\end{tikzcd}
\end{center}
We will refer to $A\times n$ as the \term{trivial {\boldmath$n$}-sock bundle}\footnote{It would also be reasonable to call it the $n$-shoe bundle.} and call a sock bundle \term{trivializable} if it is isomorphic to $A\times n$. We summarize some of these terms in the table below.

\begin{center}
\begin{tabular}{@{}lc@{}}
  \toprule
  \textbf{Geometry} & \textbf{Sock Geometry}\\
  \midrule
    Bundle & $\{X_a\}_{a \in A}$\\
    Total space & $\bigcup_{a \in A}X_a$\\
    Base space & $A$\\
    Trivial bundle & $A\times n$\\
    Trivializable bundle & $\bigcup_{a \in A}X_a \cong A\times n$\\
\bottomrule
\end{tabular}
\end{center}

Restated in these terms, here's the question we asked above.

\begin{question}
\label{q:sock_geometry}
Let $n > 0$ be a natural number. Does $\ZF$ prove that sock division by $n$ implies that all $n$-sock bundles are trivializable?
\end{question}

There is at least one special case in which this question has a positive answer: when the base space $A$ can be linearly ordered. To see why, suppose sock division by $n$ holds, let $A$ be any set and let $\preceq$ be a linear order on $A$. If $\{X_a\}_{a \in A}$ is a collection of disjoint sets of size $n$ then we know by Theorem \ref{thm-sock_multiplication} that sock division by $n$ implies that there is a bijection $f \colon \bigcup_{a \in A}X_a \to A\times n$. Since $\preceq$ is a linear order on $A$, we can linearly order $A\times n$ using $\preceq$ and the standard ordering on $\{1, 2, \ldots, n\}$. Thus we can use $f$ to simultaneously linearly order all the $X_a$'s and thereby trivialize the bundle $\{X_a\}_{a \in A}$.

\subsection*{Sock Division and Divisibility}

We will end with one more question about the power of sock division. Consider the question of what it means for the cardinality of a set $A$ to be divisible by a natural number $n$. It seems natural to define divisibility in terms of multiplication: $|A|$ is divisible by $n$ if there is some set $B$ such that $|A| = |B|\times n$. However, we saw above that there are two possible ways to interpret $|B|\times n$, and without the axiom of choice these two are not necessarily equivalent. Thus we have two possible notions of divisibility by $n$ without choice, one based on interpreting multiplication as the area of a rectangle and the other based on interpreting multiplication as repeated addition. 

These two notions were studied by Blair, Blass and Howard~\cite{blair2005divisibility}, who called them \term{strong divisibility by \boldmath$n$} and \term{divisibility by \boldmath$n$}, respectively. To help distinguish the two notions, we will refer to divisibility by $n$ as \term{weak divisibility by \boldmath$n$}.

\begin{definition}
Suppose $n > 0$ is a natural number. A set $A$ is \term{strongly divisible by $n$} if there is a set $B$ such that $|A| = |B\times n|$ and \term{weakly divisible by $n$} if there is a set $B$ and a collection $\{X_b\}_{b \in B}$ of disjoint sets of size $n$ such that $|A| = |\bigcup_{b \in B}X_b|$.
\end{definition}

In the language of sock geometry, one might say that $A$ is strongly divisible by $n$ if it is in bijection with the total space of some trivial $n$-sock bundle and weakly divisible by $n$ if it is in bijection with the total space of any $n$-sock bundle.

It is clear that strong divisibility by $n$ implies weak divisibility by $n$, but without the axiom of choice, weak divisibility does not imply strong divisibility (for example, this is implied by the results of Herrlich and Tachtsis in~\cite{herrlich2006number}). Since sock division by $n$ implies that multiplication by $n$ is equal to repeated addition of $n$, sock division by $n$ implies that strong and weak divisibility by $n$ are equivalent. However, it is not clear whether the converse holds and this seems like an interesting question.

\begin{question}
\label{q:divisibility}
Let $n > 0$ be a natural number. Does $\ZF$ prove that if strong and weak divisibility by $n$ are equivalent then sock division by $n$ holds?
\end{question}

I believe that answering questions~\ref{q:sock_geometry} and~\ref{q:divisibility} above would give insight into the world of set theory without choice more generally.

\section{Acknowledgements}

Thanks to Peter Doyle for a lively email conversation on the topic of this paper and for coming up with the name ``sock division,'' an anonymous reviewer for suggesting question~\ref{q:divisibility}, Rahul Dalal for reading and commenting on a draft, Brandon Walker for inadvertently providing me with the motivation to work on this topic and, of course, Conway, Doyle, Qiu and all the rest for inspiration.

\bibliographystyle{alpha}
\bibliography{bibliography}

\end{document}